\documentclass[10pt,twoside]{article}
\usepackage{amsfonts}
\usepackage{fancyhdr}
\usepackage{titlesec}
\usepackage{cite}
\usepackage{ifthen}
\usepackage{pifont}
\usepackage{stmaryrd}
\usepackage{setspace}
\usepackage{indentfirst}
\usepackage{amsmath,amssymb,amscd,bbm,amsthm,mathrsfs,dsfont}
\input amssym.def

\titleformat{\section}{\centering\large\bfseries}{\S\arabic{section}}{1em}{}
\newboolean{first}
\setboolean{first}{true}

\newtheorem{theorem}{Theorem}[section]

\newtheorem{definition}[theorem]{Definition}

\newtheorem{lemma}[theorem]{Lemma}

\renewenvironment{proof}[1]{\noindent\textbf{#1}}{\ \rule{0.5em}{0.5em}}

\begin{document}
\title{\bf \Large Infinite collisions of simple random walks on random recursive trees generated by Bernoulli sequences \author{Jia-shan Tang$^{*}$,\  Feng Wang,\  Xian-Yuan Wu}\date{}} \maketitle
\footnote{*Corresponding author}
 \footnote{MR Subject Classification(2010):60J10, 05C81 .}
 \footnote{Keywords: Random recursive tree, Topological end, Infinite collision property.}
 \footnote{Supported by the National Natural Science Foundation of China\, (No. 11471222, 61973015) and by the Beijing Outstanding Young Scientist Program (No. JWZQ20240101027).}
\begin{center}
\begin{minipage}{135mm}

{\bf \small Abstract}.\hskip 2mm {\small
In this paper, we study random recursive trees generated by Bernoulli sequences. Starting from a graph with two vertices and one edge, each new vertex is connected to the last vertex with probability $ p $, or to the second-last vertex with probability $ q = 1-p $, this recursive construction yields a random infinite recursive tree $T$. We prove that $T$ almost surely has exactly one topological end. Furthermore, we establish that $T$ has the infinite collision property:  two independent simple random walks on $T$ collide infinitely often almost surely.\vbox{}\\}
\end{minipage}
\end{center}

\section{Introduction and the main result}
Let $G$ be an infinite connected graph, and let $ X = \{X_n\} $ and $ X' = \{X'_n\} $ be two independent simple random walks on $G$, we say that $G$ has the \textit{infinite collision property} if $ | \{ n : X_n = X'_n \} | = \infty $ (i.e. occupy the same vertex at the same time) almost surely. Otherwise, we say $G$ has the finite collision property. The infinite collision property generalizes the classical notion of recurrence. Its study reveals deeper structural characteristics of stochastic processes.

Although transitive recurrent graphs such as $\mathbb{Z}$ and $\mathbb{Z}^2$ are easily seen to have the infinite collision property, Krishnapur and Peres\cite{KP2004} first proved that two independent simple random walks on $Comb(\mathbb{Z})$ have the finite collision property. This shows that the infinite collision property is non-monotonic. Barlow, Peres, and Sousi\cite{BPS2010} gave a sufficient condition for the infinite collision property. Chen and Chen\cite{CC2010} proved the infinite collision property on the open cluster of $\mathbb{Z}^2$ and\cite{CC2011} further extended the previous results. Hutchcroft and Peres\cite{HP2015} prove that in any recurrent reversible random rooted graph, two independent simple random walks started at the same vertex collide infinitely often almost surely. Richey\cite{R2018} discusses collisions between Brownian motion particles.

Halberstam and Hutchcroft\cite{HH2022} studied the dynamic random conductance model on $\mathbb{Z}^2$. While all prior work assumed a static environment , they pioneered the study of collision problems in dynamically random environments. Watanabe\cite{W2023} established quantitative estimates for the collision times of two independent simple random walks on the Uniform Spanning Tree (UST) over $\mathbb{Z}^3$, which yields a novel proof of the infinite collision property. This approach is distinct from existing methods based on Green’s functions. By exploiting the duality between the voter model and coalescing random walks, Astoquillca\cite{A2024} connected collision properties to the stationary measures of interacting particle systems. De Ambroggio, Nitzschner and Scali\cite{DNS2025} characterizes phase transitions of the property on several types of comb graphs with planar bases. The recent paper by Croydon, Shiraishi and Watanabe\cite{CSW2026} extended the scope of the infinite collision property to random walk traces, investigating collision behaviors of random walks on the random graph formed by the trace of four-dimensional simple random walks. In this paper, we focus on the infinite collision property on random recursive trees $T$.

The random recursive trees $T$ is constructed recursively as follows:

(1) Start with the initial graph $T_1=(V_1, E_1)$, where $V_1 = \{v_0, v_1\}$ and $E_1 = \{\langle v_0, v_1 \rangle\}$.

(2) Let $T_{n}=(V_{n},E_{n})$, at step $ n$, add a new vertex $ v_{n+1} $, so that $ V_{n+1} = V_n \cup \{v_{n+1}\} $. The new vertex $ v_{n+1}$ connects to $ v_n $ with probability $ p $, or to $ v_{n-1} $ with probability $ q = 1-p$.

Formally, $E_{n+1} = E_n \cup \{\langle v_{n+1}, w_n \rangle\} $, where the independent random variable $ w_n $ satisfies:
$$
    \mathbb{P}(w_n = v_n) = p, \quad \mathbb{P}(w_n = v_{n-1}) = q = 1-p.
$$
Let $T = \bigcup_{n=1}^{\infty} T_n$. The random variable $\{w_n\}$ follows a Bernoulli distribution.

To characterize the spatial geometry of such random infinite trees, we introduce the notion of topological ends. A topological end intuitively describes a "boundary point at infinity" of an infinite graph. This concept was originally introduced by H. Freudenthal \cite{F1942}, and its precise definition is stated in Definition \ref{def1}. Pemantle\cite{P2004} investigates the topological ends of spanning trees in $\mathbb{Z}^d$, when $d \leq 4$, the tree has only one topological end, and when $d \geq 5$ each component of the spanning forest having one or two topological ends. Our first main result concerns the number of topological ends of $T$.

\begin{theorem}\label{thm1}
 $ \mathbb{P}(\text{The random recursive tree  } T \text{ has exactly one topological end}) = 1 $.
\end{theorem}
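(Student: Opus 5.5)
The plan is to show directly that, almost surely, for every finite vertex set $K$ the graph $T\setminus K$ has exactly one infinite component. This is the criterion for $T$ to have exactly one topological end in the sense of Definition \ref{def1}. Equivalently, since $T$ is a tree, any two rays in $T$ eventually coincide. The argument rests on a deterministic structural observation about the construction plus a Borel--Cantelli argument. I would assume $p\in(0,1]$ and remark separately that the case $p=0$ must be excluded. When $p=0$, the vertices $v_0,v_2,v_4,\dots$ and $v_1,v_3,\dots$ form two rays joined only through the edge $\langle v_0,v_1\rangle$, so $T$ has two ends.

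\textbf{Structural step.} By construction, $v_{m+1}$ attaches to $v_m$ or $v_{m-1}$, so a vertex $v_k$ can only receive edges from $v_{k+1}$ and $v_{k+2}$. Fix $n\ge 1$ and suppose the event $B_n=\{w_{n+1}=v_{n+1}\}$ occurs. Set $A_n=\{v_m: m\ge n+1\}$. I claim that $A_n$ induces a connected subtree, and that the only edge of $T$ between $A_n$ and $V_n$ is $\langle v_{n+1},w_n\rangle$.

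\begin{itemize}
\item The vertex $v_{n+2}$ attaches to $v_{n+1}$, because $B_n$ occurs.
\item For every $m\ge n+2$, the vertex $v_{m+1}$ attaches to $v_m$ or $v_{m-1}$, and both have index $\ge n+1$.
\end{itemize}

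By induction, every vertex of $A_n$ is joined to $v_{n+1}$ inside $A_n$. The only edge leaving $A_n$ is therefore the one created when $v_{n+1}$ was added.

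\textbf{Probabilistic step.} The events $B_n$ are independent, and each has probability $p>0$. By the second Borel--Cantelli lemma, almost surely $B_n$ occurs for infinitely many $n$. This is the only place where randomness enters.

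\textbf{Conclusion.} Work on this almost sure event and let $K$ be finite. Choose $N$ with $K\subset V_N$, and then choose $n\ge N$ such that $B_n$ occurs.
\begin{itemize}
\item $A_n$ is connected and disjoint from $K$, so it lies in a single component of $T\setminus K$. This component is infinite.
\item Every other component of $T\setminus K$ is contained in the finite set $V_n\setminus K$, so it is finite.
\end{itemize}
Hence $T\setminus K$ has exactly one infinite component, and $T$ has exactly one end. Alternatively, in the language of rays: a ray visits each vertex at most once, so it eventually leaves $V_n$ and stays in $A_n$ for each such $n$. Two rays therefore eventually share the tail through the cut edges $\langle v_{n+1},w_n\rangle$, and are equivalent.

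The only real obstacle is identifying the right cut structure, namely the event $B_n$ that produces the cut edge. Once that is found, the rest is routine; the main care needed is handling the degenerate value $p=0$.
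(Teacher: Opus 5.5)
Your proof is correct, but it takes a different deterministic route from the paper. The probabilistic input is the same in both: $b_n=1$ for infinitely many $n$, which needs $p>0$. In the paper's encoding your event $B_n$ is $\{b_{n+1}=1\}$.

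The paper first asserts that $T$ has at most two ends. It then argues by contradiction: if two rays from $v_0$ split at $v_{N_2}$, the attachment rule forces them to alternate, with $v_{N_2+2n-1}$ on one ray and $v_{N_2+2n}$ on the other. This forces $b_n=0$ for all $n>N_2$.

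You prove the contrapositive directly. Each $B_n$ yields a cut edge $\langle v_{n+1},w_n\rangle$ whose far side is the connected tail $\{v_m:m\ge n+1\}$. You then verify the ``exactly one infinite component of $T\setminus K$'' criterion for every finite $K$.

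Your route gives existence and uniqueness of the end at once, which has three advantages:
\begin{itemize}
\item You do not need the paper's ``at most two ends'' step. The paper justifies it only briefly: the operative fact is that only $v_{n-1},v_n$ can receive new vertices, not merely that each vertex has at most two children.
\item You do not need a separate argument that at least one end exists.
\item You sidestep the paper's bound $q^{n-N_2-1}$. That bound treats the random index $N_2$ as fixed; strictly it should be a countable union over the possible values of $N_2$.
\end{itemize}
Your explicit exclusion of $p=0$ is also right, since the paper uses $q<1$ tacitly.

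The paper's route, in turn, exhibits the alternating parity structure that Section 3 reuses to describe the trunk and the branches. Your cut-edge observation is exactly the paper's later remark in Section 3: $b_n=1$ places $v_n$ on the trunk, with every later vertex as its descendant.
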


We observe that T shares a similar structure with comb graphs, and try to generalizes the method developed in \cite{CC2011} for infinite collisions of two random walks on random recursive trees. Now we state the main result of the paper as the following:
\begin{theorem}\label{thm2}
The random recursive tree $ T $ has the infinite collision property.
\end{theorem}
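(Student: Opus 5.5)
The plan is to exploit the comb-like geometry of $T$. Collisions will be counted at a fixed vertex, a first and second moment method will give infinitely many collisions with positive probability, and a zero--one argument will upgrade this to almost surely. This mirrors the Green-function approach of \cite{CC2011}.

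\textbf{Step 1: structure of $T$.} First I would make precise the picture suggested by Theorem \ref{thm1}. Each $v_k$ has at most two children, namely among $v_{k+1}$ and $v_{k+2}$. Because $T$ has exactly one end, there is a unique infinite self-avoiding path (the spine) starting at $v_0$. Every other vertex lies in a finite "bush" hanging off the spine. The spine vertices are a renewal sequence in the index $k$: whether $v_k$ lies on the spine depends only on $w_k, w_{k+1},\dots$ through a local pattern. Hence the bushes are i.i.d. after the first renewal, and their sizes have exponential tails, with rate governed by $\min(p,q)$. Consequently, almost surely, with $B(v_0,r)$ the graph ball:
\begin{itemize}
\item by the law of large numbers, $|B(v_0,r)|\asymp r$ and the effective resistance $R_{\rm eff}(v_0, B(v_0,r)^c)\asymp r$ for all large $r$;
\item by Borel--Cantelli, the largest bush within distance $r$ is $O(\log r)$.
\end{itemize}

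\textbf{Step 2: heat kernel bounds.} In particular $T$ is recurrent, and the volume and resistance in Step 1 have one-dimensional growth. I would then invoke the resistance-based heat kernel estimates of Barlow--Coulhon--Kumagai type, valid for graphs with $V(r)\asymp r$ and $R(r)\asymp r$. These should give, almost surely, for all $t\ge t_0(\omega)$,
$$
c\,t^{-1/2}\le p_{2t}(v_0,v_0),\qquad \sup_x p_{t}(x,v_0)\le C\,t^{-1/2}.
$$
Parity issues are handled by working at even times, since $T$ is bipartite.

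\textbf{Step 3: moments of the collision count.} Let
$$
Z_n=\sum_{t\le n}\mathbf 1\{X_{2t}=X'_{2t}=v_0\}
$$
for walks started at $v_0$. Then $E Z_n=\sum_{t\le n} p_{2t}(v_0,v_0)^2\ge c\log n$. For the second moment, the Markov property gives
$$
E Z_n^2\le 2\sum_{s\le t\le n} p_{2s}(v_0,v_0)^2\,p_{2(t-s)}(v_0,v_0)^2\le C(\log n)^2.
$$
Paley--Zygmund then yields $P(Z_n\ge \tfrac c2\log n)\ge \delta>0$ uniformly in $n$. Hence $P(Z_\infty=\infty)\ge\delta$.

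\textbf{Step 4: zero--one law.} The event $A$ of infinitely many collisions is invariant under the shift of the pair chain $(X,X')$. Apply the martingale convergence theorem to $h(x,y)=P_{x,y}(A)$: along the path, $h(X_n,X'_n)\to\mathbf 1_A$. The pair $(X,X')$ returns to $(v_0,v_0)$ infinitely often on $A$, and, for any starting pair, with positive probability by Step 3 applied from general starting points. Combining these, $h(v_0,v_0)$ is the limit along those returns, which forces $h\equiv 1$.

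The main obstacle I expect is Step 2, specifically making the heat kernel bounds hold uniformly with quenched, random constants. The bushes are unbounded (only $O(\log r)$), so near-diagonal bounds need the resistance estimates on all sub-balls, not only on balls centred at $v_0$. If the general theory proves awkward, I would fall back on projecting each walk onto the spine. The projection is a time change of a birth--death chain with holding times. Dwell times in bushes have exponential tails, so the local time at $v_0$ up to time $n$ can then be compared directly with that of simple random walk on $\mathbb{Z}_+$.
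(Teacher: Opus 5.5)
Your proposal is correct, but it takes a genuinely different route from the paper. The paper does no heat-kernel analysis. It observes that every bush is a path and identifies $T$ with the one-sided comb $\mathrm{Comb}^*(\mathbb{Z},f)$ with tooth lengths $f(i)=L_i$. It then carries the Chen--Chen criterion $\sum_n 1/\tilde f(n)=\infty$ (with $\tilde f$ the running maximum of the tooth lengths) over from two-sided to one-sided combs. Divergence is checked in one of two ways: deterministically from Lemma~\ref{lem1} (a tooth of length $L_i$ is followed by at least $L_i$ tooth-free spine steps, so each record interval of $\tilde f$ contributes at least $1$), or from the $O(\log n)$ bound on the longest run of zeros. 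You instead establish $p_{2t}(v_0,v_0)\asymp t^{-1/2}$ and work with the product chain. Your route gives a stronger conclusion: infinitely many collisions at the fixed vertex $v_0$, i.e.\ recurrence of the product chain. It also avoids transferring the Chen--Chen lemma to $\mathrm{Comb}^*$, which the paper only sketches. The paper's route is purely combinatorial and needs a weaker hypothesis. The comb criterion also covers cases such as $f(n)\asymp n$, where $\sum_t p_{2t}(0,0)^2<\infty$ and the product chain is transient, so it is the more robust tool when teeth can be long.

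Two remarks on your write-up. First, the obstacle you expect in Step~2 is milder than you fear:
\begin{itemize}
\item on a tree $R_{\mathrm{eff}}(x,y)=d(x,y)$;
\item the bound $\sup_{x,y}p_t(x,y)\le Ct^{-1/2}$ holds deterministically on every infinite connected graph of bounded degree;
\item the lower bound at $v_0$ only needs $|B|\le Cr$ and $R_{\mathrm{eff}}(v_0,B^c)\ge cr$ at the single centre $v_0$, with $B=B(v_0,r)$ and $\tau_B$ its exit time.
\end{itemize}
These two estimates give $E_{v_0}\tau_B\gtrsim r^2$ and $\max_y E_y\tau_B\lesssim r^2$. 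Hence $P_{v_0}(\tau_B>\delta r^2)\ge c'$ and $p_{2t}(v_0,v_0)\ge c\,P_{v_0}(\tau_B>t)^2/|B|$. Lemma~\ref{lem1} in fact yields $|B(x,r)|\le Cr$ uniformly in $x$, so even the uniform theory applies.

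Second, Steps~3--4 collapse. The identity $E Z_\infty=\sum_t p_{2t}(v_0,v_0)^2=\infty$ already says that $(v_0,v_0)$ is a recurrent state of the product chain. That chain is irreducible on pairs at even distance, so neither Paley--Zygmund nor a zero--one law is needed. If you keep Step~4, take $A=\{Z_\infty=\infty\}$. For the event of collisions anywhere, your claim that the pair returns to $(v_0,v_0)$ infinitely often on $A$ is unjustified.
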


Research on the infinite collision property is also valuable in other fields. In physics and statistical mechanics, this property determines whether particles can interact infinitely often, which directly governs the phase transition behavior of systems. In network science and graph theory, the meeting efficiency of two random walk agents affects the performance of algorithms such as gossip protocols, random search and web crawlers. In mathematical biology and ecology, this property is, in a sense, related to whether a population can maintain genetic diversity over an infinite time horizon.

The rest of the paper is organized as follows. In Section 2, we introduce topological ends and prove Theorem \ref{thm1}. In Section 3, we first analyze the properties of the trunk and branches of $T$, and then discuss the applicability of the sufficient condition given by Chen and Chen \cite{CC2011} to $T$. Finally, we present the proof of Theorem \ref{thm2}.

\section{Proof of Theorem 1.1}
We first introduce some notation that will be used throughout the paper. For $G=(V, E)$, a sequence of sides is called a path $P(v_{1},v_{n})$ from $v_{1}$ to $v_{n}$ in $G$ if $\{\langle v_{i}, v_{i+1}\rangle\}_{1\leq i\leq n-1}\in E$. The length of the path $L(P(v_{1},v_{n}))$ is defined as the number of sides of $P(v_{1},v_{n})$. $P(v_{1},\infty)$ denote a path from $v_1$ to infinity, which is abbreviated as $P_{v_1}$. A path is said to be a simple path if it contains no repeated vertices. The distance between two vertex sets $u,v\in G$ is defined by $d(u,v):=\min\{L(P(u,v))\}$. Similarly, we define the distance from a point to a set and that between two sets.

Geometrically, an end of a suitable topological space is a point at infinity. The Definition 2.1 of \cite{AQR2026} is described as follows:
\begin{definition}\label{def1}
Let $X$ be a locally compact, locally connected, connected, and Hausdorff space, and let $\{U_n\}_{n\in \mathbb{N}}$ be an infinite nested sequence $U_1 \supset U_2 \supset \cdots$ of non-empty connected open subsets of $X$, such that the following hold:

(1) For each $n \in \mathbb{N}$, the boundary $\partial U_n$ of $U_n$ is compact,

(2) The intersection $\cap_{n\in \mathbb{N}} U_n = \emptyset$,

(3) For each compact $K \subset X$ there is $m \in \mathbb{N}$ such that $K \cap U_m = \emptyset$.
\end{definition}

Two nested sequences $\{U_n\}_{n\in \mathbb{N}}$ and $\{U'_n\}_{n\in \mathbb{N}}$ are equivalent if for each $n \in \mathbb{N}$ there exist $j, k \in \mathbb{N}$ such that $U_k \subset U'_n$, and $U'_j \subset U_n$. The corresponding equivalence classes $ [U_n]_{n\in \mathbb{N}}$ of these sequences are called the topological ends of $X$.

According to the above definition, the definition of the number of topological ends of a tree is given in \cite{P2004} by R. Pemantle:

\begin{definition}\label{def2}
For a tree, the number of topological ends is defined as the number of infinite simple paths from any fixed vertex.
\end{definition}

Since $T$ is connected, this number is the same for every vertex, i.e., the number of topological ends is translation-invariant on $T$.  The sequence of choices $ \{w_n\} $ can be viewed as a sequence of Bernoulli trials $ \{b_n\} $, where
$$
  \mathbb{P}(b_n = 1) = p, \quad \mathbb{P}(b_n = 0) = q = 1-p.
$$
and the event $ \{b_n = 1\}$ corresponds to $\{w_n = v_n\}$, while $\{b_n = -1\}$ corresponds to $\{w_n = v_{n-1}\}$. Thus, we can generate a corresponding random infinite tree $ T $ based on a random infinite Bernoulli sequence $ \{b_n\} $. We now present the proof of Theorem \ref{thm1}.

\begin{proof}
P\textbf{roof: }\label{pofthm1}We first observe that once vertex $ v_{n+2} $ is added to $ T_{n+1} $ to form $ T_{n+2} $, the degree of every vertex in $ V_n $ becomes fixed, and no further descendants can be generated from them. In other words, each vertex has at most two opportunities to produce new descendants. Hence, $T$ has at most two topological ends.

We suppose that $T$ has two topological ends. 

Let $ P_{v_0} $ and $ P'_{v_0} $ be two such paths from $ v_0 $. Define $ N_2 := \max\{i : v_i \in P_{v_0} \cap P'_{v_0}\} $. Then there exist $ n_1, n_2 > N_2 $ such that $ v_{n_1} \in P_{v_0} $ and $ v_{n_2} \in P'_{v_0} $, both adjacent to $ v_{N_2} $. By the construction rules, the only possible later neighbors of $ v_{N_2} $ are $ v_{N_2+1} $ and $ v_{N_2+2} $. Without loss of generality, let $ v_{N_2+1} \in P_{v_0} $, which forces $ v_{N_2+2} \in P'_{v_0} $. If $ v_{N_2+3} \in P'_{v_0} $, then $ v_{N_2+1} $ would have no descendants, contradicting $ v_{N_2+1} \in P_{v_0} $. Thus, $ v_{N_2+3} \in P_{v_0} $. Continuing this argument inductively shows that for all $ n\ge 1, v_{N_2+2n-1}\in P_{v_0},v_{N_2+2n}\in P'_{v_0}$, i.e., for all $ n \ge N_2+1 $, we must have $ b_n = 0$. Therefore,
$$
\mathbb{P}(T \text{ has two topological ends}) \le \mathbb{P}(b_n = 0 \text{ for all } n \ge N_2+1) = \lim_{n \to \infty} q^{n-N_2-1} = 0.
$$

This completes the proof that $T$ has exactly one topological end almost surely.
\end{proof}

\section{Proof of Theorem 1.2}\label{sec3}

In the previous section, we proved Theorem 1.1, which states that $T$ has exactly one simple path from $v_0$ to infinity almost surely. This result characterizes the geometric structure of $T$ as $n$ tends to infinity. Naturally, we proceed to further elaborate on the structure of $T$ in this section. We first discuss the detailed relationship between the trunk and branches of $T$ with the random Bernoulli sequence $\{b_n\}$. Based on this analysis, we establish several lemmas required for subsequent proofs, and finally present two different proofs for Theorem \ref{thm2}.

\subsection{The trunk and branches of $T$}

From the Theorem \ref{thm1}, $T$ has exactly one simple path from $ v_0 $ to infinity with probability one. This implies that $ P_{v_0} $ is unique almost surely, and we refer to it as the \textbf{trunk} of $T$. We now characterize the conditions under which a vertex $ v_n$ lies on the trunk $P_{v_0} $ in terms of the sequence $ \{b_n\} $.

For any $ n > 0 $, if $ b_n = 1 $, then the edge $ \langle v_n, v_{n+1} \rangle $ belongs to $T$. Consequently, $ v_{n+2} $ must be a descendant of $ v_n $. By construction, all vertices added thereafter are descendants of either $ v_{n+1} $ or $ v_{n+2} $, so every vertex in $ \{v_i\}_{i \geq n+1} $ is a descendant of $ v_n $. This implies that $ v_n $ lies on the trunk $P_{v_0} $.

Next, we analyze the case where $ b_n = 0 $ and determine the condition for $ v_n$ to lie on the trunk $P_{v_0} $. 

Define $ n_1 = \max\{i : b_i = 1, i < n\} $ and $ n_2 = \min\{i : b_i = 1, i > n\} $. From the previous discussion, we know $ v_{n_1}, v_{n_2} \in P_{v_0} $. If $n_2 - n_1 $ is odd, then $ v_{n_1}, v_{n_1+1}, v_{n_1+3}, \ldots, v_{n_1+2k-1} = v_{n_2} $ all belong to $ P_{v_0} $. Consequently, $ v_n \in P_{v_0} $ if $ n - n_1 $ is also odd, and $ v_n \notin P_{v_0} $ if $ n - n_1 $ is even. Similarly, if $ n_2 - n_1 $ is even, then $ v_{n_1}, v_{n_1+2}, v_{n_1+4}, \ldots, v_{n_1+2k} = v_{n_2} $ all lie on $ P_{v_0} $. Consequently, $ v_n \in P_{v_0} $ if $ n - n_1 $ is even, and $ v_n \notin P_{v_0} $ if $ n - n_1 $ is odd.

In summary, $ v_n \notin P_{v_0} $ if and only if $ n_2-n_1 $ and $ n-n_1 $ have the \textbf{same} parity. Equivalently, this condition is equivalent to $n_2 - n $ being even.

Thus, we conclude:
$$
\begin{cases}
v_n \in P_{v_0}, & \text{if } b_n = 1, \\
v_n \in P_{v_0}, & \text{if } b_n = 0 \text{ and } n_2 - n \text{ is odd},\\
v_n \notin P_{v_0}, & \text{if } b_n = 0 \text{ and } n_2 - n \text{ is even}.
\end{cases}
$$

Therefore, given any random sequence $ \{b_n\}$, we can determine whether a vertex $ v_n $ lies on the trunk $ P_{v_0} $ of the random infinite tree $T$ generated by $ \{b_n\} $, either by checking the value of $ b_n $ or examining the parity of the distance from $ n $ to the next index at which the sequence equals 1.

For the trunk $P_{v_0}$ of $T$, there exists a strictly increasing sequence of non-negative integers $\{\sigma_i\}_{i \geq 0}$ such that $P_{v_0}(i) = v_{\sigma_i}$, with $\sigma_0 = 0$. We now analyze the transition probabilities for the sequence $\{\sigma_i\}$.

Observe that, in $T$, each vertex $v_{\sigma_i}$ has at most two descendant neighbors: $v_{\sigma_i + 1}$ and $v_{\sigma_i + 2}$. Consequently, $\sigma_{i+1} - \sigma_i \leq 2$.

If $b_{\sigma_i} = 0$, then $\langle v_{\sigma_i - 1}, v_{\sigma_i + 1} \rangle \in T$. Since $v_{\sigma_i} \in P_{v_0}$, we also have $\langle v_{\sigma_i}, v_{\sigma_i + 2} \rangle \in T$, so $\sigma_{i+1} = \sigma_i + 2$.

If $b_{\sigma_i} = 1$, then the edge $\langle v_{\sigma_i}, v_{\sigma_i + 1} \rangle \in T$.

$\bullet$ If additionally $b_{\sigma_i + 1} = 1$, then $\langle v_{\sigma_i + 1}, v_{\sigma_i + 2} \rangle \in T$, hence $\sigma_{i+1} = \sigma_i + 1$.

$\bullet$ If $b_{\sigma_i + 1} = 0$:

$\quad \circ$ When $b_{\sigma_i + 2} = 1$, $v_{\sigma_i + 1}$ has no descendants, so $\sigma_{i+1} = \sigma_i + 2$.

$\quad \circ$ When $b_{\sigma_i + 2} = 0$, and $b_{\sigma_i + 3} = 1$, $v_{\sigma_i + 2}$ has no descendants, so $\sigma_{i+1} = \sigma_i + 1$.

By induction, if $b_{\sigma_i} = 1$, then $\sigma_{i+1} = \sigma_i + 2$ if the subsequent run of $-1$s has odd length. Otherwise, $\sigma_{i+1} = \sigma_i + 1$. Thus,
$$
P(\sigma_{i+1} = \sigma_i + 1)= p^2 \sum_{k=0}^{\infty} q^{2k}= \frac{p}{1 + q}.
$$

$$
P(\sigma_{i+1} = \sigma_i + 2)= q + p^2 q \sum_{k=0}^{\infty} q^{2k}= q + \frac{pq}{1 + q} = \frac{2q}{1+q}.
$$
 
In summary:

\begin{eqnarray}
\begin{cases}
P(\sigma_{i+1} = j + 1 \mid \sigma_i = j) = \dfrac{p}{1+q},\\
P(\sigma_{i+1} = j + 2 \mid \sigma_i = j) = \dfrac{2q}{1+q}.
\end{cases}
\end{eqnarray}

If there exists a finite simple path $P'(v)\subset T$ such that $P'(v)\cap P_{v_0} = \{v\}$, we call $P'(v)$ a \textbf{branch} of $v$. For each vertex $v_{\sigma_i}$, let $L_i$ denote the length of its associated branch.

From the proof \ref{pofthm1} of Theorem \ref{thm1}, $T$ has two topological if and only if $b_n = 0$ for all sufficiently large $n$. Consequently, finite branch arise precisely from finite runs of $0$s in $\{b_n\}$. We now characterize this relationship.

Without loss of generality, suppose $b_{\sigma_i} = 1$ initiates a run of $0$s of length $k$. Then $b_{\sigma_i+j} = 0, 1 \leq j \leq k, b_{\sigma_i+k+1} = 1$.\\
$\bullet$ For $k = 1$: $b_{\sigma_i} = 1, b_{\sigma_i + 1} = 0, b_{\sigma_i + 2} = 1$. All subsequent vertices descend from $v_{\sigma_i + 2}$ or $v_{\sigma_i + 3}$, while $v_{\sigma_i + 1}$ generates no further descendants. Thus, the corresponding branch is $\{v_{\sigma_i}, v_{\sigma_i + 1}\}$, so $L_i = 1$.\\
$\bullet$ For $k = 2$: $b_{\sigma_i} = 1,  b_{\sigma_i + 1} = 0, b_{\sigma_i + 2} = 0, b_{\sigma_i + 3} = 1$. The branch is $\{v_{\sigma_i}, v_{\sigma_i + 2}\}$, so $L_i = 1$.\\
$\bullet$ For $k = 3$: $b_{\sigma_i} = 1,  b_{\sigma_i + 1} = 0, b_{\sigma_i + 2} = 0, b_{\sigma_i + 3} = 0, b_{\sigma_i + 4} = 1$. The branch is $\{v_{\sigma_i}, v_{\sigma_i + 1},\\ v_{\sigma_i + 3}\}$, so $L_i = 2$.\\
$\bullet$ For $k = 4$: $b_{\sigma_i} = 1, b_{\sigma_i + 1} = 0, \ldots, b_{\sigma_i + 4} = 0, b_{\sigma_i + 5} = 1$. The branch is $\{v_{\sigma_i}, v_{\sigma_i + 2}, v_{\sigma_i + 4}\}$, so $L_i = 2$.

By induction, if $k = 2m - 1$, the branch is $\{v_{\sigma_i}, v_{\sigma_i + 1}, v_{\sigma_i + 3}, \ldots, v_{\sigma_i + 2m - 1}\}$, and $L_i = m$. If $k = 2m$, the branch is $\{v_{\sigma_i}, v_{\sigma_i + 2}, v_{\sigma_i + 4}, \ldots, v_{\sigma_i + 2m}\}$, and $L_i = m$.

Thus, $L_i = m$ if and only if $b_{\sigma_i}$ is followed by a run of $0$s of length $2m-1$ or $2m$. Hence,

$$
\begin{aligned}
P(L_i = m) &= P(b_{\sigma_i}=1, b_{\sigma_i+1}=0, \ldots, b_{\sigma_i+2m-1}=0, b_{\sigma_i+2m}=1) \\
&\quad + P(b_{\sigma_i}=1, b_{\sigma_i+1}=0, \ldots, b_{\sigma_i+2m}=0, b_{\sigma_i+2m+1}=1) \\
&= p^2 q^{2m-1} + p^2 q^{2m} = p^2 (1 + q) q^{2m-1}.
\end{aligned}
$$

The probability that $L_i = 0$ is:

$$
\begin{aligned}
P(L_i = 0) &= 1 - \sum_{m=1}^{\infty} P(L_i = m) = 1 - \sum_{m=1}^{\infty} p^2 (1 + q) q^{2m-1} \\
&= 1 - p^2 (1 + q) \frac{q}{1 - q^2} = 1 - pq.
\end{aligned}
$$

Since $p + q = 1$, we have $P(L_i = 0) = p + q - pq = q + p(1 - q) = q + p^2$.

There are two scenarios for $L_i = 0$:

$\bullet$ If $b_{\sigma_i} = 0$, then the in-degree of $v_{\sigma_i}$ is at most 1, so $v_{\sigma_i}$ is either on $P_{v_0}$ or part of a branch of a previous vertex, implying $L_i = 0$.

$\bullet$ If $b_{\sigma_i} = 1$ and $b_{\sigma_i+1} = 1$, then $v_{\sigma_i} \in P_{v_0}$ and $L_i = 0$.

Thus, $P(L_i = 0) = P(b_{\sigma_i} = 0) + P(b_{\sigma_i} = 1, b_{\sigma_i+1} = 1) = q + p^2$, which matches the calculation above.

In conclusion, the distribution of the branch length $L_i$ for $v_{\sigma_i}$ is:
 
\begin{eqnarray}
P(L_i = m) =
\begin{cases}
q + p^2, & m = 0, \\
p^2 (1 + q) q^{2m-1}, & m \neq 0.
\end{cases}\label{001}
\end{eqnarray}

The expected length of the branch is:

$$
\begin{aligned}
E(L_i) &= \sum_{m=0}^{\infty} m P(L_i = m) = \sum_{m=1}^{\infty} m p^2 (1 + q) q^{2m-1}\\
&= p^2 q (1 + q) \sum_{m=1}^{\infty} m (q^2)^{m-1}\\
&= \frac{p^2 q (1 + q)}{(1 - q)^2 (1 + q)^2}= \frac{q}{1 + q}.
\end{aligned}
$$

The above expressions are independent of $\sigma_i$, indicating that the branch length for any vertex in $T$ (except $v_0$) follows the same distribution. 

For the initial graph $G_1$, we have $b_0 = 1$ almost surely. Therefore, the distribution of the branch length $L_0$ for $v_0$ is:

\begin{eqnarray}
P(L_0 = m) =
\begin{cases}
p, & m = 0,\\
p (1 + q) q^{2m-1}, & m \neq 0.
\end{cases}
\end{eqnarray}Its expectation is $E(L_0) = \dfrac{q}{p(1+q)} = \dfrac{q}{1-q^2}$.

Based on the above discussion, we readily obtain the following lemma.
\begin{lemma}\label{lem1}
In $T$, if $v_{\sigma_i}$ and $v_{\sigma_j}$ are the root vertices of two adjacent branches with $i < j$, then $L_i \leq j - i$.
\end{lemma}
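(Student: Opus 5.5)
The approach is to read the statement off the explicit description of branches obtained above. By that analysis, a nontrivial branch at $v_{\sigma_i}$ arises exactly when $b_{\sigma_i}=1$ is followed by a maximal run of $0$'s of length $k\ge 1$, with $b_{\sigma_i+k+1}=1$. In that case $L_i=\lceil k/2\rceil$: $L_i=m$ for $k=2m-1$ or $k=2m$. So the plan is to locate the next branch root $v_{\sigma_j}$ in terms of indices of the sequence $\{b_n\}$, and then compare $j-i$ with $\lceil k/2\rceil$.

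\emph{Step 1: the trunk inside the block.} Consider the block of indices $\sigma_i,\sigma_i+1,\dots,\sigma_i+k+1$. The index $\sigma_i+k+1$ carries $b=1$, so by the trunk criterion $v_{\sigma_i+k+1}\in P_{v_0}$. Inside the run, the trunk vertices are those indices $n$ with $b_n=0$ and $(\sigma_i+k+1)-n$ odd. Hence the trunk passes from $v_{\sigma_i}$ to $v_{\sigma_i+k+1}$ through every second index. By the step-size analysis ($\sigma_{l+1}-\sigma_l\le 2$, with steps of $2$ forced after a $0$), the number of trunk steps from $\sigma_i$ to $\sigma_i+k+1$ equals the number of trunk vertices strictly after $\sigma_i$ up to $\sigma_i+k+1$.

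\emph{Step 2: counting trunk steps.} Count these vertices:
\begin{itemize}
\item if $k=2m$, the trunk indices are $\sigma_i+1,\sigma_i+3,\dots,\sigma_i+2m+1$, which is $m+1$ steps;
\item if $k=2m-1$, they are $\sigma_i+2,\sigma_i+4,\dots,\sigma_i+2m$, which is $m$ steps.
\end{itemize}
Thus $v_{\sigma_i+k+1}=v_{\sigma_{i+s}}$ with $s\ge m=L_i$.

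\emph{Step 3: no branch root strictly in between.} The trunk vertices inside the run all have $b=0$. By the case analysis for $L=0$ (if $b_{\sigma_l}=0$ then $L_l=0$), none of them roots a nontrivial branch. So the next branch root after $v_{\sigma_i}$ is at trunk position $j\ge i+s\ge i+m$, which gives $L_i=m\le j-i$.

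The only real obstacle is interpreting ``adjacent branches''. I take it to mean consecutive roots of nonempty branches along the trunk. If instead it means consecutive trunk vertices $j=i+1$, the claim would be false for $m\ge 2$. The step I would check most carefully is the endpoint bookkeeping in Step 2: whether $v_{\sigma_i+k+1}$ itself may root the next branch (it may, since $b_{\sigma_i+k+1}=1$). The inequality still holds in that case, because Step 2 already counts it as position $i+s$.
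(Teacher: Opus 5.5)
Your argument is correct and is essentially the paper's. Both reduce to the fact that the next branch root lies at or beyond the index $\sigma_i+k+1$ that ends the run of $0$'s. The paper then converts index distance to trunk distance via $\sigma_{l+1}-\sigma_l\le 2$, whereas you count the $m$ or $m+1$ trunk steps across the run explicitly.

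One slip to fix: the parity rule you quote in Step~1 is inverted. Inside the run the trunk vertices are those with $(\sigma_i+k+1)-n$ \emph{even}; for example, when $k=1$ the vertex $v_{\sigma_i+1}$ is a leaf. Your explicit lists in Step~2 are the correct ones and contradict that rule, so the conclusion is unaffected.
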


\begin{proof}
P\textbf{roof: }Note that if both $v_{\sigma_i}$ and $v_{\sigma_j}$ have branches, then necessarily $b_{\sigma_i} = 1$ and $b_{\sigma_j} = 1$. As shown earlier, $L_i = m$ if and only if there is a run of $0$s of length $2m-1$ or $2m$ after $b_{\sigma_i}$. Thus, the branch length $L_i$ equals the ceiling of half the length of that run, so $L_i \leq \lceil (\sigma_j - \sigma_i)/2 \rceil$. Furthermore, since $\sigma_{k+1} - \sigma_k \leq 2$, we also have $\sigma_j - \sigma_i \leq 2(j - i)$. Combining these, we get $L_i \leq j - i$.
\end{proof}

\subsection{Adaptation of the Infinite Collision Lemma}\label{sub3}
Xinxing Chen and Dayue Chen \cite{CC2011} establishes a sufficient condition for the infinite collision property on $Comb(\mathbb{Z},f)$. Let $f$ denote a function mapping $\mathbb{Z}$ into $\mathbb{R}^+$. $Comb(\mathbb{Z}, f)$ is defined as the set with vertex set $\mathbb{V}=\{(x,y):x,y\in \mathbb{Z}, -f(x)\leq y\leq f(x)\}$ and edge set $\mathbb{E}=\{\langle (x,y), (x,y')\rangle:|y-y'|=1\}\cup \{\langle (x,0), (x',0)\rangle:|x-x'|=1\}$. which is stated as follows.
\begin{lemma}\label{lem2}
Let $\tilde{f}(n) = 1 \vee \max_{-n \leq i \leq n} f(i)$. If $\sum_{n=1}^{\infty} \frac{1}{\tilde{f}(n)} = \infty$, then the comb graph $\mathrm{Comb}(\mathbb{Z}, f)$ has the infinite collision property.
\end{lemma}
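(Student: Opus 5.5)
The plan is to follow the Green-function / second-moment scheme of Chen and Chen. Let $X,X'$ be independent simple random walks on $\mathrm{Comb}(\mathbb{Z},f)$ and let $Z$ be the total number of collisions. The proof has three stages: show $E[Z]=\infty$, show $P(Z=\infty)>0$ through a second-moment bound, and upgrade to probability one with a $0$--$1$ argument.

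\textbf{Stage 1: lower bound on expected collisions.} Write $B_n$ for the finite subcomb over horizontal coordinates $[-n,n]$. Let $\tau_n$ be the first time either walker leaves $B_n$. Let $Z_n$ count collisions during the window between $\tau_{n-1}$ and $\tau_n$, restricted to vertices of the teeth over $\{\pm n\}$ and to nearby base vertices.
\begin{itemize}
\item Both walkers make horizontal moves only from the base line. Their horizontal projections are therefore time-changed simple random walks on $\mathbb{Z}$, with the time change made of excursions into the teeth.
\item An excursion into a tooth of height $h$ takes expected time of order $h$. Whenever both walkers are at the same tooth, they meet with probability of order $1/h$ during that stay, by a gambler's-ruin estimate on a segment of length $h$.
\item Near scale $n$ every tooth has height at most $\tilde f(n)$, and the walkers are simultaneously in the same neighbourhood with probability bounded below.
\end{itemize}
Together these should give $E[Z_n]\ge c/\tilde f(n)$. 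Summing over $n$ then yields $E[Z]=\infty$ under the hypothesis $\sum_n 1/\tilde f(n)=\infty$. If the monotone envelope $\tilde f$ makes the scale bookkeeping awkward, I would group scales $n$ for which $\tilde f$ is constant.

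\textbf{Stage 2: second moment.} Write $Z=\sum_t\sum_v \mathbf 1\{X_t=X'_t=v\}$ and expand $E[Z_{\le N}^2]$, where $Z_{\le N}$ counts collisions before the exit from $B_N$. By the Markov property at the first collision time, the second moment reduces to the supremum over starting points $(x,x)$ of the expected number of subsequent collisions. The target is
$$E\bigl[Z_{\le N}^2\bigr]\le C\,\bigl(E[Z_{\le N}]\bigr)^2 .$$
This needs matching upper bounds: from a common starting vertex, the expected number of collisions up to leaving $B_N$ should be at most a constant times the same $\sum_{n\le N}1/\tilde f(n)$. The tools I would use are:
\begin{itemize}
\item the heat-kernel bound $P(X_t=v)\le C\,\deg(v)/|B_{r(t)}|$ for the reversible walk on the finite comb, where $r(t)$ is the horizontal range reached by time $t$;
\item the volume bound $|B_n|\le (2n+1)(2\tilde f(n)+1)$.
\end{itemize}
Paley--Zygmund then gives $P(Z_{\le N}\ge \tfrac12 E[Z_{\le N}])\ge 1/(4C)$ uniformly in $N$. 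Letting $N\to\infty$ yields $P(Z=\infty)\ge 1/(4C)>0$.

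\textbf{Stage 3: $0$--$1$ upgrade.} The constants above are uniform in the starting pair, since the estimates use only the monotone envelope $\tilde f$ and shifting the starting points changes $\tilde f$ by a bounded re-indexing. Hence $P_{(x,x')}(Z=\infty)\ge\delta>0$ for all starting states $(x,x')$ after a preliminary time. Apply Lévy's $0$--$1$ law to the tail event $\{Z=\infty\}$: we have $P(Z=\infty\mid\mathcal F_t)=P_{(X_t,X'_t)}(Z=\infty)\ge\delta$, and this conditional probability converges to $\mathbf 1\{Z=\infty\}$. Therefore $P(Z=\infty)=1$.

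The main obstacle is the upper bound in Stage 2. The teeth make the walk strongly inhomogeneous: time is dominated by deep teeth, while collisions on the base line are rare. One has to check that collisions inside a single tall tooth cannot be so numerous that they inflate the second moment. My first attempt would be to decompose each collision count by the tooth in which it occurs. Within one tooth of height $h$, the expected number of collisions given a meeting is $O(h)$, but the probability of a meeting there is $O(1/h^2)$ relative to the time spent. These should balance to the same $1/\tilde f$ order as the lower bound.
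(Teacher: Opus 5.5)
\textbf{The gap is in Stage 2, and Stage 2 carries all the content.} On $\mathrm{Comb}(\mathbb{Z})$ the expected number of collisions is already infinite, yet collisions are a.s.\ finite (Krishnapur--Peres). So Stages 1 and 3 prove nothing without a valid second-moment bound.

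Your reduction $E[Z_{\le N}^2]\le 2E[Z_{\le N}]\sup_v E_{(v,v)}[Z_{\le N}]$, combined with the target $\sup_v E_{(v,v)}[Z_{\le N}]\le C\sum_{n\le N}1/\tilde f(n)$, is false. Take $f(x)=|x|$, so that $\tilde f(n)=n$ and $\sum_{n\le N}1/\tilde f(n)\asymp\log N$. Let $v$ sit at height about $N/2$ in the tooth over $x=N-1$. Up to time $cN^2$ both walkers stay in that tooth with probability close to one, and they behave as independent simple random walks on a segment started together. Hence $E_{(v,v)}[Z_{\le N}]\ge c'N$. Tall teeth make the supremum arbitrarily larger than the relevant average, so you would need a Green-function-weighted estimate, which you do not set up. Your closing remark about balancing $O(h)$ collisions against meeting probabilities is exactly that missing estimate, stated as a hope.

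The tools you list also fail:
\begin{itemize}
\item The bound $P(X_t=v)\le C\deg(v)/|B_{r(t)}|$ is false in general. Take $f(1)=H$ and $f\equiv 0$ elsewhere. Then $P(X_t=(0,0))\asymp t^{-1/2}$ for even $t\ll H^2$, whereas $|B_r|\ge 2H$ for every $r\ge 1$, because $|B_r|$ counts whole teeth the walk has not explored.
\item An \emph{upper} bound $|B_n|\le(2n+1)(2\tilde f(n)+1)$ can only give lower bounds on the heat kernel, never upper bounds.
\end{itemize}

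\textbf{Stage 1 also assumes the hard part.} The claim that the walkers are ``simultaneously in the same neighbourhood with probability bounded below'' is not automatic. The two horizontal projections are time-changed by independent, heavy-tailed tooth excursions, so they need not stay synchronized. This is precisely Lemma 2.1 of Chen--Chen quoted in the paper: for every $\epsilon>0$ there is $d$ with $P_{x,x'}(\tau_N\ge\theta_{dN}\wedge\theta'_{dN})<\epsilon$, for all $N$ and all even-parity starting pairs.

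The cited route then needs no upper bounds at all:
\begin{itemize}
\item At a horizontal meeting, one walker has just made a horizontal step, so it sits on the base line of the common column.
\item A gambler's-ruin estimate in that tooth gives a collision with conditional probability roughly of order at least $1/\tilde f(dN)$.
\item Since $\tilde f$ is nondecreasing, $\sum_N 1/\tilde f(dN)=\infty$ follows from the hypothesis, and a conditional Borel--Cantelli argument yields infinitely many collisions.
\end{itemize}
Counting horizontal meetings rather than collisions is what sidesteps the heavy-tailed collision clusters inside tall teeth that break your second moment.

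Finally, the comb is bipartite, so the uniform $\delta$ in your Stage 3 can only hold for starting pairs of even total parity.
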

 
\cite{CC2011} provides a rigorous proof for this. Here, we only discuss whether the above lemma holds for  $\mathrm{Comb}^*(\mathbb{Z}, f)$. The graph $\mathrm{Comb}^*(\mathbb{Z}, f)$ is defined to have vertex set $\mathbb{V'} = \{(x, y) : x, y \in \mathbb{Z}_{\geq 0}, 0 \leq y \leq g(x)\}$ and edge set $\mathbb{E'} = \{\langle (x, y), (x, y') \rangle : |y - y'| = 1\} \cup \{\langle (x, 0), (x', 0) \rangle : |x - x'| = 1\}$. In other words, $\mathrm{Comb}^*(\mathbb{Z}, f)$ is the part of the first quadrant of $\mathrm{Comb}(\mathbb{Z}, f)$.

Let $X = \{X_n, n \geq 0\}$ be a simple random walk on $\mathrm{Comb}(\mathbb{Z}, f)$. For $n \geq 0$, write $X_n = (x_n, y_n)$. Let $B_n = \{(x, y) \in \mathbb{V} : |x| \leq n\}$, and define the stopping time $\theta_n = \inf\{m \geq 0 : X_m \notin B_{n-1}\}$. Thus, if $X_0 \in \mathbb{V}_{n-1}$, then $\theta_n$ is the first hitting time of the vertices $\{(-n, 0), (n, 0)\}$ for $X$. 

Let $X' = \{X'_n, n \geq 0\}$ be another simple random walk on $\mathrm{Comb}(\mathbb{Z}, f)$, independent of $X$. Similarly, define $x'_n, y'_n, \theta'_n$ for $X'$.

Define another sequence of stopping times:
$$
\tau_{m+1} := \inf\{n > \tau_m : x_n = x'_n \text{ and } (x_n \neq x_{n-1} \text{ or } x'_n \neq x'_{n-1})\},
$$with $\tau_0 = 0$. Lemma 2.1 of \cite{CC2011}: For any $\epsilon > 0$, there exists $d \in \mathbb{N}$ such that
$$
P_{x, x'}(\tau_N \geq \theta_{dN} \wedge \theta'_{dN}) < \epsilon.
$$for all $N \in \mathbb{N}$ and all $x = (x_1, y_1), x' = (x'_1, y'_1) \in V$ with $x_1 + y_1 + x'_1 + y'_1$ even.

Let $\tilde{X}, \tilde{X'}$ are two independent simple random walks on $\mathrm{Comb}^*(\mathbb{Z}, f)$. Similarly, define $\tilde{x}_n, \tilde{y}_n, \tilde{\theta}_n$ for $\tilde{X}$, $\tilde{x'}_n, \tilde{y'}_n, \tilde{\theta'}_n$ for $\tilde{X'}$ and $\tilde{\tau_n}$. 

It suffices to discuss the variations of horizontal coordinates. Let $Z, Z', \tilde{Z}, \tilde{Z'}$ denote the horizontal coordinate evolution of $X, X', \tilde{X}, \tilde{X'}$. Since $P((n,0), (n+1,0))=P((n,0), (n-1,0)$ holds on $\mathrm{Comb}(\mathbb{Z}, f)$, the Strong Law of Large Numbers indicates that $Z, Z'$ are simple random walk on $\mathbb{Z}$. Similarly, $\tilde{Z}, \tilde{Z'}$ are simple random walk on $\mathbb{Z}_{\geq 0}$.

Let $\tilde{Z}(i)= |Z(i)|, \tilde{Z'}(i)= |Z'(i)|$, if $Z(j) = Z'(j)$ or $Z(j) = -Z'(j)$, then the corresponding coordinates in the non-negative comb are equal. Thus, $\tilde{\tau}_N \leq \tau_N$ holds trivially. Also, $\tilde{\theta}_{dN} = \theta_{dN}$ and $\tilde{\theta'}_{dN} = \theta'_{dN}$, so $\tilde{\theta}_{dN} \wedge \tilde{\theta'}_{dN} = \theta_{dN} \wedge \theta'_{dN}$. Therefore,
$$
\{\tilde{\tau}_N \geq \tilde{\theta}_{dN} \wedge \tilde{\theta'}_{dN}\} \subseteq \{\tau_N \geq \theta_{dN} \wedge \theta'_{dN}\},
$$

which implies

\begin{eqnarray}
P_{\tilde{x}, \tilde{x}'}(\tilde{\tau}_N \geq \tilde{\theta}_{dN} \wedge \tilde{\theta'}_{dN}) \leq P_{x, x'}(\tau_N \geq \theta_{dN} \wedge \theta'_{dN}) < \epsilon.
\end{eqnarray}

The rest of the proof is similar and is omitted.

\begin{lemma}\label{lem3}
For any random recursive tree $T$ generated by a Bernoulli sequence $\{b_n\}$, there exists a unique comb graph $\mathrm{Comb}^*(\mathbb{Z}, f)$ corresponding to it.
\end{lemma}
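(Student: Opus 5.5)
The plan is to build the comb explicitly from the trunk and branch decomposition of Section 3.1, and then check that the result is a graph isomorphism. We work on the almost sure event of Theorem \ref{thm1}, where the trunk $P_{v_0}$ is unique and is indexed by the increasing sequence $\{\sigma_i\}_{i\ge 0}$. The trunk vertex $v_{\sigma_i}$ is sent to $(i,0)$. We then set $f(i):=L_i$, the length of the branch rooted at $v_{\sigma_i}$. If $L_i=m\ge 1$, the branch is the path $v_{\sigma_i},v_{\sigma_i+1},v_{\sigma_i+3},\dots,v_{\sigma_i+2m-1}$ or $v_{\sigma_i},v_{\sigma_i+2},\dots,v_{\sigma_i+2m}$, according as the run of $0$s following $b_{\sigma_i}$ has odd or even length. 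Its $y$-th vertex is sent to $(i,y)$ for $1\le y\le m$. This defines a map $\Phi:V(T)\to \mathbb V'$, where $\mathrm{Comb}^*(\mathbb{Z},f)$ has teeth of height $g=f$. Each index $i$ is a nonnegative integer, so the image lies in the first quadrant.

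The key steps, in order, are the following.
\begin{enumerate}
\item Every vertex of $T$ lies either on the trunk or on exactly one branch. If $v_n\notin P_{v_0}$, follow the unique path from $v_n$ toward $v_0$ until it first hits the trunk, say at $v_{\sigma_i}$. The case analysis preceding (\ref{001}) shows that the off-trunk vertices hanging from $v_{\sigma_i}$ form a single path, not a subtree. The reason is that each vertex has at most two later neighbours, and the run-of-zeros description forces all non-trunk descendants of $v_{\sigma_i}$ to alternate along one chain. In particular each branch is a simple path, so it embeds as a tooth.
\item $\Phi$ is a bijection. Injectivity follows from the uniqueness of the trunk and the disjointness of the branches. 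Surjectivity follows because the tooth at $i$ has exactly $L_i=f(i)$ vertices above its base.
\item $\Phi$ preserves edges. Trunk edges $\langle v_{\sigma_i},v_{\sigma_{i+1}}\rangle$ go to $\langle (i,0),(i+1,0)\rangle$, and consecutive branch vertices go to vertically adjacent points. Conversely, $T$ is a tree in which every vertex is on the trunk or on a branch path. So there are no other edges, for instance no edge between two branches or from a branch to a non-root trunk vertex, since any such edge would create a cycle.
\end{enumerate}

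Uniqueness comes next. The sequence $\{b_n\}$ determines the trunk through the three-case parity criterion derived above. It therefore determines $\{\sigma_i\}$ and each $L_i$, and hence $f$. Any graph isomorphism to a comb must send the unique infinite simple path from $v_0$ to the spine $\{(x,0)\}$, with $v_0$ at the origin, and must send finite hanging paths to teeth. So $f$ is forced.

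The main obstacle is step 1: showing rigorously that the off-trunk part at each $v_{\sigma_i}$ is a single path of length exactly $L_i$, rather than a tree with several branches. My first attempt would be induction along the run of zeros $b_{\sigma_i+1}=\dots=b_{\sigma_i+k}=0$, $b_{\sigma_i+k+1}=1$. At each step the new vertex attaches to the second-last vertex, so the two "active" vertices advance in parallel chains. When the run ends, one chain continues as the trunk and the other terminates; the one that terminates is determined by the parity of $k$. Lemma \ref{lem1} ensures that these chains do not interfere with the next branch root.
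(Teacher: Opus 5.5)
Your proposal is correct and is essentially the paper's proof: the same map $\Phi$ with $f(i)=L_i$, plus the isomorphism checks that the paper leaves implicit. Step 1 needs no appeal to Lemma \ref{lem1}, whose proof already presupposes the branch description: every $v_j$ with $b_j=1$ lies on the trunk, so off-trunk vertices have $b_j=0$ and at most one child ($v_{j+2}$), while each trunk vertex has at most one off-trunk neighbour. For uniqueness, $f$ is forced only for isomorphisms with $v_0\mapsto(0,0)$, which must be built into ``corresponding'' rather than deduced, since e.g.\ when $L_0\ge1$ the leaf of the branch at $v_0$ can be sent to $(0,0)$ instead, giving a different $f$ with $T\cong\mathrm{Comb}^*(\mathbb{Z},f)$.
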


\begin{proof}
P\textbf{roof: }Construct a mapping $\Phi: T \to \mathbb{Z}_{\geq 0}^2$ as follows:

\begin{eqnarray}
\Phi(v) =
\begin{cases}
(i, 0), & \text{if } v = v_{\sigma_i} \in P_{v_0}; \\
(i, d(v, v_{\sigma_i})), & \text{if } v \in P'_{v_{\sigma_i}}.
\end{cases}
\end{eqnarray}

Define the function $f: \mathbb{Z}_{\geq 0} \to \mathbb{R}$ by $f(i) = L_i$. This defines a comb graph $\mathrm{Comb}^*(\mathbb{Z}, f)$. We can thus identify the image $\Phi(T)$ with $\mathrm{Comb}^*(\mathbb{Z}, f)$.

Note that the mapping $\Phi$ is not injective; that is, there can exist $T \neq T'$ such that $\Phi(T) = \Phi(T')$.
\end{proof}

\subsection{Proof of Theorem \ref{thm2}}
\begin{proof}\ 

We have coupled the random recursive tree $T$, generated by the sequence $\{b_n\}$, to $\Phi(T)$ via Lemma \ref{lem3}, and established the applicability of Lemma \ref{lem2} to $\Phi(T)$. And then, we only need to prove that $\sum_{n=1}^{\infty} \frac{1}{\tilde{f}(n)} = \infty $ on $\Phi(T)$ to show that the random recursive tree $T$ has the infinite collision property via Lemma \ref{lem2}. We used two methods to prove that the sum is infinite. 
 
\textbf{M1:} Define a new sequence of stopping times $\xi_{i+1} := \inf\{n > \xi_i : f(n) > f(\xi_i)\}$, with $\xi_0 = 0$. By Lemma \ref{lem1}, we have $\xi_{i+1} - \xi_i \geq f(\xi_i)$, and $f(j) = 0$ for $\xi_i + 1 \leq j \leq \xi_i + f(\xi_i) - 1$. Therefore,

\begin{eqnarray}
\sum_{n=\xi_i}^{\xi_{i+1}-1} \frac{1}{\tilde{f}(n)} \geq \sum_{n=\xi_i}^{\xi_i + f(\xi_i) - 1} \frac{1}{f(\xi_i)} = f(\xi_i) \cdot \frac{1}{f(\xi_i)} = 1.
\end{eqnarray}

If the set $\{\xi_n\}$ is infinite, then

\begin{eqnarray} 
  \sum_{n=1}^{\infty} \frac{1}{\tilde{f}(n)} = \sum_{i=0}^{\infty} \sum_{j=\xi_i}^{\xi_{i+1}-1} \frac{1}{\tilde{f}(j)} \geq \sum_{i=0}^{\infty} 1 = \infty.\label{001}
 \end{eqnarray}

If there exists $M > 0$ such that the set $\{\xi_n\}$ has size $M$, then for all $n > M$, we have $f(n) < f(M)$. Thus,

\begin{eqnarray}
  \sum_{n=1}^{\infty} \frac{1}{\tilde{f}(n)} = \sum_{n=1}^{M-1} \frac{1}{\tilde{f}(n)} + \sum_{n=M}^{\infty} \frac{1}{\tilde{f}(n)} \geq \sum_{n=1}^{M-1} \frac{1}{\tilde{f}(n)} + \sum_{n=M}^{\infty} \frac{1}{f(M)}.\label{002}
  \end{eqnarray}

The first part is a sum of finitely many terms, resulting in a finite value. The second sum is an infinite sum of a positive constant, which diverges to infinity. Hence, $\sum_{n=1}^{\infty} \frac{1}{\tilde{f}(n)} = \infty$.

In conclusion, from (\ref{001}) and (\ref{002}), $\sum_{n=1}^{\infty} \frac{1}{\tilde{f}(n)} = \infty$ holds.

\textbf{M2:} Let $S_N$ be the length of the longest run of 1s in a Bernoulli sequence of length $N$ with parameter $q$. Theorem 1.1 of \cite{MWW2015} gives a large deviation result for $S_N$: For each $x > 0$, \quad

\begin{eqnarray}  
 \lim_{N \to \infty} \frac{-1}{\ln N} \ln P\left[ \frac{S_N}{\ln N} \geq \eta(q) + x \right] = x \eta(q),
\end{eqnarray}
 
where $\eta(q) = (\ln \frac{1}{q})^{-1}$.

From this theorem, we obtain:

$$
P(S_N \geq (\eta(q) + x) \ln N) \sim N^{-x / \eta(q)}.
$$

Let $A_N = \{S_N \geq (\eta(q) + x) \ln N\}$. For $x > \eta(q)$, we have

\begin{eqnarray}
\sum_{N=1}^{\infty} P(A_N) \sim \sum_{N=1}^{\infty} N^{-x / \eta(q)} < \infty.
  \end{eqnarray}
 
By the Borel-Cantelli lemma, $P(A_N \text{ i.o.}) = 0$. That is, with probability 1, only finitely many of the events $A_N$ occur.

From Lemma \ref{lem1}, we know $\sigma_n \leq 2n$. For any $n \geq 1$, let $\theta_n$ be the length of the $0$s run containing $\sigma_n$. The above result implies that

\begin{eqnarray} 
P(\theta_n \geq (\eta(q) + x) \ln(n + \theta_n) \text{ occurs only finitely often}) = 1.
 \end{eqnarray}

There exists a constant $k_1$ such that for all $n$, $(\eta(q) + x) \ln(n + \theta_n) \leq k_1 n$. Therefore, if $b_{\sigma_n} = 1$, then $\tilde{f}(n) \leq S_{2n}$.
If $b_{\sigma_n} = 0$, then $\tilde{f}(n) \leq S_{2n} \vee S_{(k_1+2)n}$.

In summary, there exists $k_2 > 0$ such that $\tilde{f}(n) \leq S_{k_2 n}$. Consequently,

\begin{eqnarray} 
\sum_{i=1}^{\infty} \frac{1}{\tilde{f}(i)} \geq \sum_{i=1}^{\infty} \frac{1}{S_{k_2 i}} \geq \sum_{i=1}^{\infty} \frac{1}{(\eta(q)+x) \ln (k_2 i)} = \frac{1}{(\eta(q)+x)} \sum_{i=1}^{\infty} \frac{1}{\ln k_2i} = \infty \quad \text{a.s.}
\end{eqnarray}

This completes the proof of Theorem \ref{thm2}.
\end{proof}


\noindent School of Mathematical Sciences, Capital Normal University, Beijing, 100048, People R China.\\
\indent Email:15213277742@163.com, wangf@cnu.edu.cn, wuxy@cnu.edu.cn

\end{document}